\newcommand{\QQ}{\mathbb{Q}}
\newcommand{\ZZ}{\mathbb{Z}}
\newtcolorbox{mybox}[3][]
{
	colframe = #2!25,
	colback  = #2!10,
	coltitle = #2!20!black,  
	title    = {#3},
	#1,
}
\theoremstyle{theorem}
\newtheorem{theorem}{Theorem}
\theoremstyle{lemma}
\newtheorem{lemma}{Lemma}
\theoremstyle{definition}
\newtheorem{definition}{Definition}
\newtheorem*{remark}{Remark}
\author{R. Padmanabhan}
\address{
	Department of Mathematics\\
	University of Manitoba\\
	Winnipeg, Manitoba  R3T 2N2\\
	Canada}
\email{padman@cc.umanitoba.ca}
\author{Alok Shukla}
\address{
	Mathematical and Physical Sciences division\\
	Ahmedabad University\\
	India}
\email{Alok.Shukla@ahduni.edu.in}
\keywords{2+2=5, Polynomial Group Law, Diophantine equation, Chakravala algorithm, Pell's equation.}
\subjclass{11Z05 ; 11D99}
\date{}
\begin{document}

\begin{abstract}
%

The phrase ``$ 2+2=5 $'' is a cliché or a slogan used in political speeches, propaganda, or literature, most notably in the novel ``$ 1984 $'' by George Orwell. More recently, we came across a You-Tube short film comedy, Alternative Math, produced by IdeaMan Studios (see \cite{Danny}). It is a hilarious exaggeration of a teacher who is dragged through the mud for teaching that $ 2+2=4 $ and not $ 22 $, as Danny, a young student, kept on insisting.  In the movie, Danny and the whole community sincerely believe $ 1+1=11 $ and $ 2+2=22 $. Jokes aside, we ask the question whether a polynomially defined group law ``$\oplus$'' defined over the field of rationals such that $ 1\oplus1=u $ and $ 2 \oplus2=v $ can simultaneously be satisfied for arbitrary integers $u$ and $ v $. Answer to this question takes us through a fascinating journey from Brahmagupta all the way to the modern works of Louis Joel Mordell and Ramanujan!

\end{abstract}
\maketitle
\section{Introduction.} \label{Sec:Intro}

In the standard mathematical world of arithmetic, the truth of `two plus two equals four' is self-evident. However, a protagonist in the `Notes from the Underground', by Fyodor Dostoevsky says: 

\textit{``I admit that twice two makes four is an excellent thing, but if we are to give everything its due, twice two makes five is sometimes a very charming thing too.''}

Taking inspiration from the works of fiction by Orwell and Dostoevsky, we ask rather seriously: can we really have $ 2 + 2 =5 $? If yes, then what about $ 1+1 $ in such a mathematical system? And, what if we desire both $ 1 +1 =11 $ and $ 2+2 = 5 $ to hold together? \footnote{In 2017, while election campaigning in the state of Gujarat, India, the Indian Prime Minister Narendra Modi declared, "1+1 is not 2 but 11 and together we will take Gujarat to new heights". Another political leader, the general secretary  of Communist Party of India (Marxist), Mr.~Sitaram Yechury said in 2017: ``Politics is not just arithmetic. Two plus two could become twenty two if we strengthened people’s struggle.'' See also a funny math video \cite{Danny}.}  

Here we ask a more general question: when does the group law ``$ \oplus $'' satisfy both $ 1\oplus1=u $ and $ 2\oplus2=v $?
To investigate this, let us fix a field $ K $. The case of interest for us will be when $ K $  is a finite extension of $ \QQ $. We will mainly consider the case of trivial extension, i.e., when $ K $ is $ \QQ $. We further impose the condition that $ \oplus  $ is defined by a polynomial $ P(x,y) \in K[x,y] $, i.e., $ x \oplus y = P(x,y) $.
Our main goal will be to answer the following two questions.
\begin{enumerate} 
	\item Given $ u $ and $ v $, find $ P $ and $ K $ such that $ 1 \oplus 1 = u $  and $ 2 \oplus 2 =v $?
	\item Given some conditions on $ u $ and $ v $, and $ K= \QQ $, will it be possible to have $ 1 \oplus 1 = u $  and $ 2 \oplus 2 =v $? 
\end{enumerate}
\begin{definition} \label{Def:main}
	Let $ K $ be a finite extension of $ \QQ $.	We say that \textbf{$\bf (u,v,P,K) $ is true}, if there exists a commutative group operation $ x \oplus  y $  given by a polynomial $ P(x ,y) \in K[x,y] $, such that
	\begin{align} \label{Eq:Defmainaxiom}
	1 \oplus 1 = u \qquad \text{and} \qquad 2 \oplus 2 = v.
	\end{align}
\end{definition}

\section{Characterization of the polynomial group operation \\$ \bf x  \oplus y = P(x,y) $.}

In order to address the questions raised in the previous section, we will first characterize the polynomial $ P(x,y) $, using the commutative and associative properties of the group law $ \oplus $. 
Suppose the highest degree of $ x $ in $ P(x,y) $ is $ n $. Then, associativity of $ P(x,y) $ implies $ P(P(x,y),z) = P(x,P(y,z)) $. The highest degree of $ x $ on the left is $ n^2 $, whereas it is $ n $ on the right side. Therefore, $ n^2=n $. We ignore the case $ n=0 $, as it will mean that $ P(x,y) $ doesn't depend on $ x $. A contradiction, since the group operation $ \oplus $ must depend on both $ x $ and $ y $. Similarly, the highest degree of $ y $ in $ P(x,y) $ is also $ 1 $. Next, commutativity of $ P(x,y) $ implies that
$ P(x,y) $ is symmetric in $ x $ and $ y $, hence it can be assumed that
\begin{align} \label{Eq:defPfirst}
P(x,y) = a xy + bx +by + c.
\end{align}
Using associativity, we write $ (1 \oplus 2) \oplus 3 = 1 \oplus (2 \oplus 3) $. Now
\begin{align}
&P(P(1,2),3) = P(1,P(2,3)) \nonumber \\
& \implies (2a + 3b + c,3) = P(1,6a+5b+c) \nonumber \\
&\implies  3a(2a+3b+c) + b(2a +3b+c +3) + c \nonumber \\
& \hspace{4cm}= a(6a+5b+c)+b(6a+5b+c+1)+c \nonumber \\ 
& \implies ac = b^2 -b.
\end{align}
If $ a=0 $, then $ b$ must be $ 1 $ as both $ a $ and $ b $ can't be zero simultaneously. Therefore, there are two possibilities for $ P(x,y) $.
Either,
\begin{align} \label{Eq:DefPone}
P(x,y) = x + y + c,
\end{align}
or else, if $ a \neq 0 $ then
\begin{align}\label{Eq:DefPtwo}
P(x,y) = axy +bx + by + \frac{b^2-b}{a}.
\end{align}
\begin{remark}
	See the proof of Lemma 5, \cite{brawley2001associative}, for a similar proof of the above result.
\end{remark}

The former case, $ P(x,y) = x+y +c $ is easy to deal with, and we immediately dispose it of. It is clear that in this case $ 1 \oplus 1 = u  $ and $ 2 \oplus 2 =v $ implies that
$ 2 + c =u $ and $ 4 + c = v $. But then $ v-u  = 2$. Therefore, we conclude that:
\begin{lemma}\label{Lemma:one}
	 If $ u,v \in \QQ $ be such that $  v -u =2$, then there exists a group operation $ \oplus $, given by the polynomial $ x \oplus y = P(x,y) = x+y +u-2 $, such that $ 1 \oplus 1 =u $ and $ 2 \oplus 2 =v $, i.e.,
	 $ (u,u+2,x+y+u-2,\QQ) $ is true.
\end{lemma}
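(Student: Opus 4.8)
The plan is to take the candidate polynomial $P(x,y) = x + y + c$ already isolated in \eqref{Eq:DefPone}, pin down the constant $c$ so that the two prescribed equations hold, and then confirm that this choice genuinely defines a commutative group on $\QQ$. Since the hypothesis supplies $v - u = 2$, I expect the correct value to be $c = u - 2$, and the whole argument should then reduce to a short sequence of direct substitutions and axiom checks.

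First I would set $c = u - 2$ and evaluate the two required quantities. One has $1 \oplus 1 = 1 + 1 + c = 2 + (u-2) = u$, and $2 \oplus 2 = 2 + 2 + c = 4 + (u-2) = u + 2$, which equals $v$ precisely because $v - u = 2$. Thus both equations in \eqref{Eq:Defmainaxiom} hold for this single choice of $c$.

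Next I would verify that $x \oplus y = x + y + c$ is a commutative group operation on $\QQ$. Closure is immediate since $c \in \QQ$, and commutativity follows from the symmetry of $x + y + c$. For associativity I would check that both $P(P(x,y),z)$ and $P(x,P(y,z))$ collapse to $x + y + z + 2c$. The identity element is $-c$ and the inverse of $x$ is $-x - 2c$, both lying in $\QQ$; in fact the translation $x \mapsto x + c$ gives an isomorphism with $(\QQ,+)$, which makes the group structure transparent.

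There is no genuine obstacle here: the statement is essentially a verification, and the only point requiring care is translating the hypothesis $v - u = 2$ into the correct constant $c = u - 2$ and confirming that this one choice simultaneously satisfies both equations. Assembling these observations establishes that $(u, u+2, x+y+u-2, \QQ)$ is true, as claimed.
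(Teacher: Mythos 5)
Your proof is correct and follows the same route as the paper: the text preceding the lemma derives $2+c=u$ and $4+c=v$ (forcing $v-u=2$ and $c=u-2$), and the subsequent remark verifies the group structure with identity $-c$ via the translation isomorphism with $(\QQ,+)$, exactly as you do. Your write-up is if anything slightly more complete, since you make the associativity and inverse checks explicit.
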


\begin{remark}
	We note here that $ x \oplus y = P(x,y) $ given by Eq.~$ \eqref{Eq:DefPone} $ and Eq.~$ \eqref{Eq:DefPtwo} $, both define a group. It can directly be checked that $ -c $ is the identity element in the former case and $ \frac{1-b}{a} $ is the identity elements in the later case. Moreover, it is interesting to note that the group obtained is isomorphic to $ K $ in the former case, whereas the group obtained in the later case is isomorphic to the multiplicative group of the non-zero elements of the field, i.e., $ K^{\times} $. The isomorphism in the case when $ P(x,y) $ is defined by Eq.~$ \eqref{Eq:DefPtwo} $, is given by $ f(x) = ax + b $ as shown below.
	\begin{align*}
	f(P(x, y))  &	= aP(x,y) + b \\
&	= a(axy+bx+by+(b^2 - b)/a) + b \\
&	= a^2xy+abx+aby+(b^2 - b) + b \\
&	= (ax + b)(ay + b) \\
&	= f(x) \times f(y).	
	\end{align*}
This proves the isomorphism. In particular, to find the identity element e, we simply solve the equation $ f(e) = 1:  a e + b = 1 $, or $ e = (1-b)/a $. 
Moreover, the inverse of $0 \in K $ under the map $ f $ is $ -b/a $, therefore, the group given by $ x \oplus y = P(x,y) = axy+bx+by+(b^2 - b)/a$ is defined on the set $ K - \{-b/a\} $.	

In the following, we will continue to use the statement `$ (u,v,P,K) $ is true' as defined in Def.~$ \ref{Def:main} $, even when the group might be defined on the set $ K - \{-b/a\} $, rather than on $ K $.
\end{remark}

\section{Connections with number theory.}
Now we consider the other more interesting case $ a \neq 0 $, and assume that $ P(x,y) $ is given by Eq.~$ \eqref{Eq:DefPtwo} $.
Since, from Eq.~$ \eqref{Eq:Defmainaxiom} $, we have  $ 1 \oplus 1 = u $, and $ 2 \oplus 2 = v $, we need to solve for $ a,b $, in the following equations:
\begin{align}
a^2 - b + 2 a b + b^2 &= a u, \label{Eq:first}\\ 4 a^2 - b + 4 a b + b^2 &= a v. \label{Eq:second}
\end{align}
Subtracting Eq.~$ \eqref{Eq:first} $ from Eq.~$ \eqref{Eq:second} $ gives,
\begin{equation}\label{Eq:subtract}
3 a^2 + 2 ab = a (v-u)
\end{equation}	
Since $ a \neq 0 $ by assumption,  Eq.~$ \eqref{Eq:subtract} $ implies $  3 a + 2 b = v -u $, or $ b =  \frac{1}{2}(-3 a - u + v) $.  Substituting 
$ b =  \frac{1}{2}(-3 a - u + v) $ in  Eq.~$ \eqref{Eq:first} $ leads to a quadratic equation in $ a $, which can easily be solved.
We note the final solutions \footnote{One can use the Mathematica code $ solve (resultant (a^2 - b + 2ab + b^2 - au, 4a^2 - b + 4ab + b^2 - av, b), a) $ to get $ a $ as given in Eq.~$ \eqref{Eq:ab} $.}
\begin{align}\label{Eq:ab}
a = -3 + u + v \pm \sqrt{9 - 8 u - 4 v + 4 u v} & \quad \text{and} \qquad b =  \frac{1}{2}(-3 a - u + v).
\end{align}

For $ u=11 $ and $ v=5 $, from the above equation, we see that the solution is possible in $ \QQ $, i.e., $ (11,5,24xy - 39x - 39y + 65,\QQ) $ is true. As remarked earlier, we note that $ -b/a = 39/24 $ is excluded from $ \QQ $, while defining the group in this example. However, it is not always possible to have $ 1 \oplus 1 =u $ and $ 2 \oplus 2 =v $, defined by a polynomial group law $ \oplus $ over rationals, i.e., $ (u,v,P,\QQ) $ is not always true.
As an example, for $ u=11 $ and $ v=22 $ we have
\begin{align*}
a = 3( 10  \pm \sqrt{89}),\, b = \frac{1}{2}(11 - 3a).
\end{align*}
In this case, $ (11,22,P,\QQ) $ is false, but $ (11,22,P, \QQ(\sqrt{89})) $ is true, answering Danny's question, \cite{Danny}.

Now we consider, given $ u,v \in \ZZ $, whether $ (u,v,P,\QQ) $ is true (with $ P $ as defined in Eq.~$ \eqref{Eq:DefPtwo} $).
It is clear that for $ (u,v,P,\QQ) $ to be true $ 9 - 8 u - 4 v + 4 u v $ must be perfect square.
Consider the Diophantine equation  
\begin{align}
9 - 8 u - 4 v + 4 u v = n^2.
\end{align}
for all possible integers $ u,v $ and $ n $. 
First we assume that $ n $ is fixed. Clearly for a solution to exist $ n^2 \equiv 1 \mod 4  $. This means $ n $ must be odd. Then, in fact, $ n^2 \equiv 1 \mod 8  $. Let this be the case. Assume $ n= 2 m +1 $. Then, we have
\begin{align}
&2 - 2 u -  v + u v =  m(m+1)    \nonumber\\ 
\implies &(u-1)(v-2) = m(m+1) = \frac{1}{4}(n^2-1). \label{eq:m_and_m_plus_one}
\end{align}
From Eq.~$ \ref{eq:m_and_m_plus_one} $, for a given $ n $ we can count the number of solutions $ (u,v) $, and it is given by $ \sigma_0(\frac{1}{4}(n^2-1)) = $ the number of divisors of $ \frac{1}{4}(n^2-1) $.

Next, we assume that $ u,v $ are given. From Eq.~$ \eqref{eq:m_and_m_plus_one} $, a necessary condition for $ (u,v,P,\QQ) $ to be true is

\begin{align}
 (u-1)(v -2)  \equiv 0 \mod 2.
\end{align}

Since, $ n $ is odd,  $ \frac{n^2-1}{4} \geq 0 $. This means $ (u-1)(v-2) \geq 0 $.

Now we will prove a number of results characterizing the truth of  $ (u,v,P,\QQ) $. 
\begin{theorem}
\
	\begin{enumerate}[label=(\roman*)]
 \item Let $ u-1 $ and $ v-2 $ be prime numbers. Then, 
	\begin{align}
	(u,v,P,\QQ) \, \, \text{is true (clarify  Def.~$ \ref{Def:main} $) }  \implies  u=v = 4  \text{ or }  u =3 , v = 5.
	\end{align}
	\item $ |u-v+1 | =1  \implies (u,v,P,\QQ) \, \, \text{is true } $.
	\end{enumerate}
\end{theorem}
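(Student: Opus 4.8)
The plan is to funnel both parts through the single characterization already established: for integers $u,v$ (in the nontrivial case $a\neq 0$), the statement $(u,v,P,\QQ)$ is true exactly when $9-8u-4v+4uv$ is a perfect square, equivalently, by \eqref{eq:m_and_m_plus_one}, when $(u-1)(v-2)=m(m+1)$ for some integer $m\ge 0$. Everything then reduces to a small number-theoretic analysis of when a product of two integers of a prescribed shape can coincide with a product $m(m+1)$ of consecutive integers.

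For part (i), I would set $p=u-1$ and $q=v-2$, both prime, and assume $(u,v,P,\QQ)$ is true, so that $pq=m(m+1)$ with $m\ge 0$ (recall $(u-1)(v-2)\ge 0$). The case $m=0$ forces $pq=0$, which is impossible, so $m\ge 1$ and $\gcd(m,m+1)=1$. Since $m$ and $m+1$ are coprime with product $pq$, each of them must absorb a full set of the primes it contains; enumerating the coprime factorizations of $pq$ leaves only $\{m,m+1\}=\{1,pq\}$ or, when $p\neq q$, $\{m,m+1\}=\{p,q\}$. The first forces the consecutive integers $1$ and $pq$, i.e. $pq=2$, impossible for a product of two primes, and the subcase $p=q$ similarly offers only $\{1,p^2\}$ and is excluded. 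Hence $\{m,m+1\}=\{p,q\}$, so $p$ and $q$ are consecutive integers that are both prime, and the unique such pair is $\{2,3\}$. Reading back through $\{u-1,v-2\}=\{2,3\}$ then yields precisely $u=3,v=5$ and $u=v=4$.

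For part (ii), the key observation is that $(u-1)-(v-2)=u-v+1$, so the hypothesis $|u-v+1|=1$ says exactly that $u-1$ and $v-2$ are consecutive integers. Writing them as $\{m,m+1\}$ gives $(u-1)(v-2)=m(m+1)$ for free, so the discriminant $9-8u-4v+4uv=(2m+1)^2$ is automatically a perfect square and one may solve for $(a,b)$ via \eqref{Eq:ab}. The only remaining care is to exhibit a genuine group law, i.e. $a\neq 0$: of the two sign choices in \eqref{Eq:ab} exactly one is degenerate (for instance when $u=v$ one finds $a\in\{0,\,4u-6\}$), so selecting the other sign produces an integer $a\neq 0$ together with a rational $b$, establishing the truth. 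Alternatively I would split into the cases $u=v$ and $v=u+2$, the latter being already covered by the additive law of Lemma~\ref{Lemma:one}.

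As for the main obstacle, there is no deep difficulty here: both parts rest on elementary facts about consecutive integers---their coprimality, and that $2,3$ is the only pair of consecutive primes. The one place demanding genuine attention is the bookkeeping in part (i), namely verifying that the enumeration of coprime factorizations of $pq$ is exhaustive and that every degenerate possibility ($m=0$, the factorization $\{1,pq\}$, and $p=q$) is excluded, so that $\{2,3\}$ is truly forced. In part (ii) the only subtlety is the $a\neq 0$ check, which guarantees an honest group rather than a collapsed one.
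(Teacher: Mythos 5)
Your proposal is correct and follows essentially the same route as the paper: part (i) reduces $(u-1)(v-2)=m(m+1)$ to a coprime factorization forcing the consecutive primes $\{2,3\}$, and part (ii) splits into $u=v$ and $u=v-2$ to exhibit the discriminant as a perfect square. You are in fact a bit more careful than the paper, explicitly ruling out the factorization $\{m,m+1\}=\{1,pq\}$ and the case $p=q$ in (i), and checking $a\neq 0$ in (ii), but these are minor refinements of the same argument.
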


\begin{proof}\
	\begin{enumerate}[label=(\roman*)]
	\item 	Since, $ 	(u,v,P,\QQ)  $ is true, from Eq.~$ \eqref{eq:m_and_m_plus_one} $,
		\begin{align}
		(u-1)(v-2) =  \left(\frac{n-1}{2}\right) \left(\frac{n+1}{2}\right). \label{eq:pellgcd}
		\end{align}
		Since, $ \gcd(\frac{n+1}{2},\frac{n-1}{2}) =1 $, we have the following cases:
		\begin{align*}
		\begin{cases}
		u-1 =  \frac{n+1}{2} \qquad &\text{and } v -2 =  \frac{n-1}{2}, \\
		u-1 =  \frac{n-1}{2} \qquad &\text{and } v -2 =  \frac{n+1}{2}.\\
		\end{cases}
		\end{align*}
		From the above, we get that $ \frac{n-1}{2} $ and $ \frac{n+1}{2} $ must be consecutive primes. This means $ \frac{n-1}{2} =2 $ and $ n=5 $. Then, either $ u= v = 4$ or $ u= 3 $ and $ v = 5$ and the proof is complete.\\
		\item 
		As $ |u-v+1 | =1 $, we get either $ u=v $ or $ u = v-2 $. If $ u=v $, then $ 9 - 8 u - 4 v + 4 u v = (2u-3)^2 $. Next, if $ u =v-2 $, then $ 9 - 8 u - 4 v + 4 u v = (2v-5)^2 $. Therefore, in both the cases $ (u,v,P,\QQ)  $ is true, and we are done.
		\end{enumerate}
\end{proof}

	Our next result is related to Pell's equation, i.e., equation of the form $ n^2 - d t^2 = 1 $, where $  d $ is a fixed positive non-square integer, and integer solutions for $ (n,t) $ are sought. In order ot solve Pell's equation, it is helpful to consider the factorization $  n^2 - d t^2 = (n+ \sqrt{d} t) (n-  \sqrt{d} t)  $. For simplicity assume $ d $ is square-free and also $ d \not \equiv 1  \mod 4$. Then numbers of form $n+\sqrt{d}t  $ form a ring $ \ZZ[\sqrt{d}] $, which is the ring of integers for the number field $ \QQ(\sqrt{d}) $. For an element $ z = n+\sqrt{d}t  $, we define its conjugate as $ \bar{z} =  n- \sqrt{d} t  $. Then norm is defined as $ N(z) = z \bar{z} $. It is easy to see that norm is multiplicative, i.e., $ N(ab) = N(a)N(b) $ for $ a,b \in {\ZZ[\sqrt{d}]} $. Any $ z = n+\sqrt{d}t $ is a solution to Pell's equation $ n^2 - d t^2 = 1 $, if $ N(z)=1 $.  Let $ \epsilon > 1 $ be the smallest number in $ \ZZ[\sqrt{d}]$, such that $ N(\epsilon)=1 $, then $ \epsilon $ is a fundamental unit. Indeed, every unit of norm $ 1 $ is of the form of $ \pm\epsilon^k $ for some integer $ k$. To see this suppose that $ N(z) = 1 $. If necessary replacing $ z $ by $ -z $ we can assume $ z > 0 $. There exist a unique integer $ k $ such that $ \epsilon^k  \leq z < \epsilon^{k+1} $. Then  $ N(\epsilon^{-k} z)= N(\epsilon)^{-k} N(z)=1 $ and $ 1 \leq \epsilon^{-k} z < \epsilon $. Thus the existence of $ \epsilon_1 = \epsilon^{-k} z $ contradicts the minimality of $ \epsilon $ unless $ \epsilon_1 =1 $. Assuming this to be the case, we obtain $ z = \epsilon^{k} $, as claimed. 
	
	Now we give our next result.
\begin{theorem} 
	If $ (u-1)(v-2) = 2 t^2  $, where $ u,v $ and $ t > 0$ are integers, then
\begin{align}
	(u,v,P,\QQ) \, \, \text{is true (clarify  Def.~$ \ref{Def:main} $) }  \iff 	t = \frac{(3+2 \sqrt{2})^m - (3 - 2 \sqrt{2})^m}{4 \sqrt{2}} & \label{Eq:texp}
	\\ \qquad \qquad \text{for some positive integer $ m $.}\nonumber
\end{align}	
\end{theorem}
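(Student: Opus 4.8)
The plan is to convert the truth of $(u,v,P,\QQ)$ into a Pell equation and then read off its solutions using the fundamental-unit machinery set up just above the statement. First I would recall that, by Eq.~\eqref{Eq:ab}, $(u,v,P,\QQ)$ is true precisely when the discriminant $9-8u-4v+4uv$ is a perfect square $n^2$; indeed, since we will see that $n^2=8t^2+1\geq1$ forces $n\neq0$, the two candidate values $a=-3+u+v\pm n$ are distinct, so at most one vanishes and a genuine group law with $a\neq0$ does exist. Feeding the hypothesis $(u-1)(v-2)=2t^2$ into Eq.~\eqref{eq:m_and_m_plus_one}, which reads $(u-1)(v-2)=\tfrac14(n^2-1)$, I would obtain $n^2-1=8t^2$, i.e.
\begin{equation*}
n^2-2(2t)^2=1 .
\end{equation*}
Thus the truth of $(u,v,P,\QQ)$ is equivalent to the solvability of this Pell equation with $d=2$ in the unknowns $n$ and $s:=2t$.

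Next I would invoke the theory recalled before the statement. Since $2$ is square-free and $2\not\equiv1\pmod4$, the ring $\ZZ[\sqrt2]$ has fundamental unit $\epsilon=3+2\sqrt2$ (the smallest solution of $N(z)=1$), and every element of norm $1$ is $\pm\epsilon^k$. Writing a solution as $z=n+s\sqrt2$ with $N(z)=1$, the condition $s=2t>0$ singles out exactly the elements $z=\epsilon^m$ with $m\geq1$: the choices $z=-\epsilon^{-m}=-(3-2\sqrt2)^m$ merely reproduce the same positive $s$-values, while $m=0$ gives $s=0$ and $m<0$ gives $s<0$. For such $z$ one has $z^m=n_m+s_m\sqrt2$ and $\bar z^{\,m}=n_m-s_m\sqrt2$, so subtracting gives $s_m=\bigl((3+2\sqrt2)^m-(3-2\sqrt2)^m\bigr)/(2\sqrt2)$, and hence $t=s_m/2$ yields exactly the claimed formula.

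The one genuine point needing care is that $s=2t$ must be \emph{even} for $t$ to be an integer, and conversely that the equation $n^2-8t^2=1$ and the reduced equation $n^2-2s^2=1$ describe the same solution set. I would settle this by the recurrence coming from $\epsilon$: from $(3+2\sqrt2)(n_m+s_m\sqrt2)=(3n_m+4s_m)+(2n_m+3s_m)\sqrt2$ one reads off $s_{m+1}=2n_m+3s_m$, so starting from $s_0=0$ an easy induction shows every $s_m$ is even. Therefore each Pell solution indeed has $t=s_m/2\in\ZZ$, and no solutions are lost in passing between the two forms. This parity check, rather than any deep input, is the main (and rather minor) obstacle.

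Finally, for the converse direction I would run the equivalences backwards. If $t$ has the displayed form then $t=s_m/2$ for some $m\geq1$, so $8t^2+1=2s_m^2+1=n_m^2$ is a perfect square; consequently the discriminant $9-8u-4v+4uv=4(u-1)(v-2)+1=8t^2+1$ is a perfect square, and hence $(u,v,P,\QQ)$ is true by the characterization via Eq.~\eqref{Eq:ab}. This closes the biconditional.
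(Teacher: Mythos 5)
Your proposal is correct and follows essentially the same route as the paper: reduce the statement via Eq.~\eqref{eq:m_and_m_plus_one} to the Pell equation $n^2-8t^2=1$, and describe all solutions using the fundamental unit $3+2\sqrt{2}$ of $\ZZ[\sqrt{2}]$. Your explicit parity check that the $\sqrt{2}$-coefficient of $(3+2\sqrt{2})^m$ is always even (so that $t=s_m/2$ is an integer) is a small point the paper passes over silently, but it does not change the argument.
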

\begin{proof}
Assume $ (u,v,P,\QQ)  $ to be true. Then from Eq.~$ \eqref{eq:m_and_m_plus_one} $ and the given hypothesis $ (u-1)(v-2) = 2 t^2 $, we obtain
	\begin{align}
	n^2 - 8 t^2 = 1. \label{eq:pelld2}
	\end{align}
	This is a special case of Pell's equation $ n^2 - d t^2 = 1 $, with $ d=8 $. In the number field $ K = \QQ(\sqrt{2}) $, we can write
	\begin{align*}
	(n+2 \sqrt{2} t) (n- 2 \sqrt{2} t) =1.
	\end{align*}
	
	Then $ N(n+ 2 \sqrt{2} t) = 1$, where $ N(\cdot) $ is the usual norm in  $ K$. We note that $ n=3,2t=2 $ is a solution of Eq.~$ \ref{eq:pelld2} $, and in fact, $ 3 + 2 \sqrt{2} $ is a fundamental unit (see Table 4, Page 280, \cite{alaca2004introductory}). Then from Theorem 11.3.2, \cite{alaca2004introductory}, it follows that any solution of $ n^2 - 8 t^2 =1  $, will be such that  $ n+ 2 \sqrt{2} t = \pm (3 +\sqrt{2} \cdot 2)^m $ for some  integer   $ m $. We can assume $ m $ to be a positive integer. Then on solving for $ t $, in  $ n+ 2\sqrt{2} t =  (3 +\sqrt{2} \cdot 2)^m $ and $ n-2 \sqrt{2} t =  (3 +\sqrt{2} \cdot 2)^{-m} =  (3 - \sqrt{2} \cdot 2)^{m} $, the result follows. 
	
	Now, to prove the other direction assume that $ t $ is given by Eq.~$ \eqref{Eq:texp} $. Then on defining 
	\begin{align}
	n = \frac{(3+2 \sqrt{2})^m + (3 - 2 \sqrt{2})^m}{2}, \label{Eq:nexp}
	\end{align}
	we see that $ n^2 - 8 t^2 =1 $. This along with $ (u-1)(v-2) =2 t^2$ gives $ (u-1)(v-2) = \frac{n^2-1}{4} $. Then,  $ 9 - 8 u - 4 v + 4 u v = n^2 $. Therefore, $ (u,v,P,\QQ) $ is true. This completes the proof for the other direction.
\end{proof}

We state the following result, whose proof is similar to the previous theorem. 
\begin{theorem}
	Let $ \alpha_d + \beta_d \sqrt{2d}  $ be the fundamental unit of the number field $ K = \QQ(\sqrt{2d}) $, where $ d $ is a square free odd integer.
	Also assume $ (u-1)(v-2) = 2 d t^2  $, with $ u,v $ and $ t > 0$ being integers. Then
	\begin{align}
	(u,v,P,\QQ) \, \, \text{is true (clarify  Def.~$ \ref{Def:main} $) }  \iff \nonumber \\	t  = \frac{(\alpha_d + \beta_d \sqrt{2d})^m - (\alpha_d - \beta_d \sqrt{2d})^m}{4 \sqrt{2d}}, & \label{Eq:texp2}
	\end{align}	
	for some positive integer $ m $. 
\end{theorem}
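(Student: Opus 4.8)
The plan is to follow the proof of the preceding theorem almost verbatim, replacing the modulus $8$ by $8d$ and the field $\QQ(\sqrt2)$ by $K=\QQ(\sqrt{2d})$, with one extra integrality check that was invisible when $d=1$.

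For the forward implication I would assume $(u,v,P,\QQ)$ is true, so that $9-8u-4v+4uv=n^2$ for some integer $n$. Feeding the hypothesis $(u-1)(v-2)=2dt^2$ into the identity $(u-1)(v-2)=\frac{n^2-1}{4}$ from Eq.~\eqref{eq:m_and_m_plus_one} gives the Pell equation
\[
n^2-8dt^2=1.
\]
Because $8d=4\cdot 2d$, this factors in $K$ as $(n+2\sqrt{2d}\,t)(n-2\sqrt{2d}\,t)=1$, so $z:=n+2\sqrt{2d}\,t=n+\sqrt{2d}(2t)$ is a unit of norm $1$ in the ring of integers $\ZZ[\sqrt{2d}]$ of $K$ (here $2d\equiv 2\pmod 4$). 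By the description of norm-$1$ units recalled just before the previous theorem, $z=\pm(\alpha_d+\beta_d\sqrt{2d})^k$ for some integer $k$; using $t>0$ and adjusting the sign I may take $z=(\alpha_d+\beta_d\sqrt{2d})^m$ with $m$ a positive integer. Conjugating gives $n-2\sqrt{2d}\,t=(\alpha_d-\beta_d\sqrt{2d})^m$, and subtracting the two expressions and solving for $t$ yields exactly Eq.~\eqref{Eq:texp2}.

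For the converse I would start from $t$ as in Eq.~\eqref{Eq:texp2}, set $n=\tfrac12\bigl((\alpha_d+\beta_d\sqrt{2d})^m+(\alpha_d-\beta_d\sqrt{2d})^m\bigr)$ in analogy with Eq.~\eqref{Eq:nexp}, and check directly that $n+2\sqrt{2d}\,t=(\alpha_d+\beta_d\sqrt{2d})^m$ and $n-2\sqrt{2d}\,t=(\alpha_d-\beta_d\sqrt{2d})^m$. Multiplying these, $n^2-8dt^2=N\bigl((\alpha_d+\beta_d\sqrt{2d})^m\bigr)=1$, and together with $(u-1)(v-2)=2dt^2$ this gives $(u-1)(v-2)=\frac{n^2-1}{4}$, so $9-8u-4v+4uv=n^2$ is a perfect square and $(u,v,P,\QQ)$ is true.

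The one genuinely new point --- and the step I expect to need the most care --- is checking that the $n$ and $t$ manufactured above are integers, equivalently that if $(\alpha_d+\beta_d\sqrt{2d})^m=A_m+B_m\sqrt{2d}$ then $B_m$ is even for every $m$; this is what justifies writing $z$ in the shape $n+2\sqrt{2d}\,t$ with $t\in\ZZ$. I would deduce it from $\alpha_d^2-2d\beta_d^2=1$ (norm $1$, by the paper's convention for the fundamental unit): reducing modulo $8$ forces $\alpha_d$ odd, whence $2d\beta_d^2\equiv 0\pmod 8$, and since $d$ is odd we get $4\mid\beta_d^2$, so $\beta_d$ is even. The recursion $B_{m+1}=A_m\beta_d+B_m\alpha_d$ then propagates evenness of $B_m$ to all $m$ by induction, which also shows that every norm-$1$ unit automatically has even $\sqrt{2d}$-coordinate, closing the gap.
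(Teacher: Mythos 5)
Your proposal is correct and follows essentially the approach the paper intends: the paper gives no separate proof here, stating only that it is ``similar to the previous theorem,'' and your argument is precisely that adaptation, reducing to the Pell equation $n^2-8dt^2=1$ and invoking the description of norm-one units of $\ZZ[\sqrt{2d}]$ as powers of the fundamental unit. Your added verification that $\beta_d$ (and hence every $B_m$) is even --- which guarantees $t$ and $n$ are integers --- is a genuine point that the paper leaves implicit, and it is argued correctly from $\alpha_d^2-2d\beta_d^2=1$ with $d$ odd.
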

\begin{remark}[Chakravala: an ancient algorithm.]
We note that the proof of the above theorem depended upon the solution of Pell's equation $ n^2 -8dt^2 =1 $. Indeed, Pell's equation has a very interesting history. It is one of the cases of wrong attributions in mathematics. 	

In 1657 Fermat posed a challenge to mathematicians. The challenge was to find integer solutions for the equation $ x^2 - N y^2 = 1 $, for values of $N$ like $ N = 61, 109 $. 	Several centuries earlier, in 1150, Bhaskara II had already found solutions for the problem proposed by Fermat. 
	\begin{align*}
	1766319049^2  - 61(226153980)^2 &= 1 \\
	158070671986249^2 - 109(15140424455100)^2 &= 1.
	\end{align*}
 In fact, Brahmagupta (598-665) had already solved this equation in the early seventh century for various values of $ N $, such as $ N = 83 $ and $ N = 92 $.
Brahmagupta viewed these problems very highly and he had remarked: ``Any person who is able to solve these two cases, within a year, is truly a mathematician''!

Let $ (a, b; m) $ to denote an integer solution of the equation $ x^2 - Ny^2 = m $. 
Brahmagupta discovered the following `composition rule' in the early seventh century.

		\begin{align}
		(a, b; m) * (c, d; n) \to (ac \pm N bd, ad \pm bc; mn).
		\end{align}
This is perhaps one of the earliest example of the use a `group-theoretic' argument in mathematics.  This	`composition rule' allowed Brahmagupta to  obtain new solutions from old known solutions, since clearly by composing a known solution $ (a,b;m) $ with a triple $ (p, q; 1) $,  one can easily find  new solutions $ (ap \pm N bq, aq \pm bp; n) $.
Later, Jayadeva, Narayana and Bhaskara had refined and built on the works of Brahmagupta to devise an algorithm called the ``Chakravala'' for finding all the integer solutions of the equation $ x^2 -N y^2 = \pm 1 $ for any positive integer $ N $. We refer readers to \cite{weil2006number} for an interesting discussion on this. 
\end{remark}

\begin{theorem}
	If $ (u-1)(v-2) =  2 t^3  $, where $ u,v $ and $ t $ are integers, then
	\begin{align}
	(u,v,P,\QQ) \, \, \text{is true (clarify  Def.~$ \ref{Def:main} $) }  \iff 	t  = 0 \text{ or } t = 1.  \label{Eq:texp3}
	\end{align}	
\end{theorem}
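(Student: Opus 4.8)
The plan is to collapse the truth of $(u,v,P,\QQ)$ into a single Diophantine equation and then recognise it as a Mordell equation. Recall that $(u,v,P,\QQ)$ is true exactly when $9 - 8u - 4v + 4uv$ is a perfect square, and that identically
\begin{equation}
9 - 8u - 4v + 4uv = 4(u-1)(v-2) + 1 .
\end{equation}
Feeding in the hypothesis $(u-1)(v-2) = 2t^3$, the truth of $(u,v,P,\QQ)$ becomes equivalent to the solvability in integers of
\begin{equation}
n^2 = 8 t^3 + 1 .
\end{equation}
So everything reduces to deciding for which integers $t$ the number $8t^3+1$ is a perfect square.

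Next I would normalise this to Weierstrass form. Writing $X = 2t$ and $Y = n$ turns $n^2 = 8t^3 + 1$ into the Mordell curve
\begin{equation}
Y^2 = X^3 + 1 .
\end{equation}
I would then invoke the classical determination of its integral points: the only $(X,Y) \in \ZZ^2$ satisfying it are $(-1,0)$, $(0,\pm 1)$ and $(2,\pm 3)$. Because our substitution forces $X = 2t$ to be even, the point with $X = -1$ is irrelevant, while $X = 0$ gives $t = 0$ and $X = 2$ gives $t = 1$. This proves the forward implication: truth of $(u,v,P,\QQ)$ forces $t \in \{0,1\}$. For the converse I would just substitute back: $t = 0$ gives $8t^3+1 = 1 = 1^2$ and $t = 1$ gives $8t^3+1 = 9 = 3^2$, so in both cases $9 - 8u - 4v + 4uv$ is a perfect square and $(u,v,P,\QQ)$ is true. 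The two implications together yield the stated equivalence.

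The one genuinely hard step is the integral-point count on $Y^2 = X^3 + 1$; the rest is bookkeeping. The cleanest route is to cite Mordell's theory directly: this curve has rank $0$ with torsion subgroup $\ZZ/6\ZZ$, so its six affine rational points are precisely the six integral points listed above. If a self-contained argument is preferred, I would instead work straight from $n^2 = 8t^3 + 1$. Since $n$ is forced to be odd, writing $n = 2k+1$ gives $k(k+1) = 2t^3$; the two coprime factors $k$ and $k+1$ then each become, after stripping the single factor of $2$, a perfect cube, and the problem collapses to the cubic Thue equations $d^3 - 2c^3 = \pm 1$. Establishing that these admit only the small solutions corresponding to $t = 0$ and $t = 1$ is the real kernel of the argument, and it is exactly here that a finiteness input — Thue's theorem, or an explicit descent in $\ZZ[\sqrt[3]{2}]$ — is unavoidable.
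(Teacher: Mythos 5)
Your proof is correct and follows essentially the same route as the paper: both reduce the condition to the perfect-square criterion $n^2 = (2t)^3 + 1$ and then cite the classical determination of the integral points of the Mordell curve $y^2 = x^3 + 1$. Your write-up is in fact slightly more complete, since you explicitly note that $X = 2t$ must be even (ruling out $X=-1$) and verify the converse direction, both of which the paper leaves implicit.
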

\begin{proof}
	From $ (u-1)(v -2) = \frac{1}{4}(n^2 -1)$ we need to solve,
	\begin{align}
	n^2 =  (2t)^3 + 1. \label{eq:pelld2}
	\end{align}
	This is a special case of  Mordell’s equation $ y^2 = x^3 + k $, with $ k=1 $. 
It is known that only integral solutions of this equation are $ (x, y) = (-1, 0),(0, \pm 1)$, and $(2, \pm 3)$ (See Theorem 5, Chapter 26, Page 247, \cite{mordell1969diophantine}). 
\end{proof}

For example, we see that if $ t=0 $, then $ (u-1)(v-2) =0 $. Suppose, $ u=1 $. Then from Eq.~$ \eqref{Eq:ab} $ we get $ (a,b) = (v-1,1-v) $ or $ (a,b)=(v-3,4-v) $ and $ P(x,y) $ as given in Eq.~$ \eqref{Eq:DefPtwo} $ is a rational polynomial. Similarly, if $ v=2 $ then from Eq.~$ \eqref{Eq:ab} $ we get $ (a,b) = (u,1-2u) $ or $ (a,b)=(u-2,4-2u) $ and $ P(x,y) $ is again a rational polynomial in this case.

\begin{theorem}
	If $ (u-1)(v-2) =  2 (2^{t-3}-1)  $, where $ u,v $ and $ t $ are integers with $ t > 1 $, then
	\begin{align}
	(u,v,P,\QQ) \, \, \text{is true (clarify  Def.~$ \ref{Def:main} $) }  \iff 	t \in \{3,4,5,7,15\}.  \label{Eq:texp3}
	\end{align}	
\end{theorem}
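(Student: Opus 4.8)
The plan is to collapse the truth condition into a single exponential Diophantine equation in $n$ and $t$, and then to recognize that equation as a classical one whose complete solution set is already known. First I would invoke the characterization established in Eq.~\eqref{eq:m_and_m_plus_one}: the statement $(u,v,P,\QQ)$ is true precisely when $(u-1)(v-2) = \frac{1}{4}(n^2-1)$ for some (necessarily odd) integer $n$. Substituting the hypothesis $(u-1)(v-2) = 2(2^{t-3}-1)$ yields $\frac{1}{4}(n^2-1) = 2(2^{t-3}-1)$, and clearing denominators gives $n^2 - 1 = 8 \cdot 2^{t-3} - 8 = 2^t - 8$, so that
\begin{align}
n^2 + 7 = 2^t. \label{Eq:RNeq}
\end{align}
Thus, under the stated hypothesis, the truth of $(u,v,P,\QQ)$ is equivalent to the solvability of Eq.~\eqref{Eq:RNeq} in the integer $n$ for the given exponent $t$.

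The key step is to identify Eq.~\eqref{Eq:RNeq} as the celebrated Ramanujan--Nagell equation $x^2 + 7 = 2^m$, which Ramanujan conjectured in 1913 to possess exactly the five positive solutions $x \in \{1,3,5,11,181\}$, corresponding respectively to $m \in \{3,4,5,7,15\}$, a conjecture later proved by Nagell. I would simply cite this theorem. The forward direction is then immediate: if $(u,v,P,\QQ)$ is true, then Eq.~\eqref{Eq:RNeq} is solvable, forcing $t \in \{3,4,5,7,15\}$. For the converse, for each such $t$ I would exhibit the corresponding $n \in \{1,3,5,11,181\}$, verify that $\frac{1}{4}(n^2-1) = 2(2^{t-3}-1) = (u-1)(v-2)$, and conclude via Eq.~\eqref{eq:m_and_m_plus_one} that $9 - 8u - 4v + 4uv = n^2$ is a perfect square, so that $a,b$ in Eq.~\eqref{Eq:ab} are rational and $(u,v,P,\QQ)$ is indeed true.

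The main obstacle is concentrated entirely in the number theory: the finiteness and exact determination of the solution set of $x^2 + 7 = 2^m$ is a deep, non-elementary fact whose standard proof proceeds by factorization in the ring of integers of $\QQ(\sqrt{-7})$, and reproving it lies well outside the scope of this argument. Everything else in the proof is routine algebraic substitution and verification. In keeping with the treatment of Pell's equation and Mordell's equation in the preceding theorems, I would treat the Ramanujan--Nagell theorem as a cited black box, which fittingly completes the advertised journey from Brahmagupta through Mordell to Ramanujan.
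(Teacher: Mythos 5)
Your proposal is correct and follows essentially the same route as the paper: reduce the hypothesis via Eq.~\eqref{eq:m_and_m_plus_one} to the Ramanujan--Nagell equation $n^2+7=2^t$ and cite Nagell's theorem that its only positive solutions have $t\in\{3,4,5,7,15\}$. Your explicit treatment of the converse direction (exhibiting $n$ for each admissible $t$ and checking that $9-8u-4v+4uv=n^2$) is slightly more careful than the paper, which leaves that direction implicit.
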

\begin{proof}
	First let $ (u,v,P,\QQ)  $ be true. Then from Eq.~$ \eqref{eq:m_and_m_plus_one} $ and the given hypothesis $ (u-1)(v-2) = 2 (2^{t-3}-1)  $, we get
	\begin{align}
	n^2 +7 = 2^t. \label{eq:ramanujan}
	\end{align}
	We recall this is  Ramanujan-Nagell equation. 	In fact, Ramanujan conjectured in 1913 that $ (1, 3), (3, 4), (5, 5), (11, 7)$ and
	$(181, 15) $ are only positive solutions $ (n, t) $ of the Diophantine equation $ n^2 + 7 = 2^t $.
	Nagell proved this conjecture in $ 1948 $, \cite{nagell1961diophantine}. Therefore, from this our result follows.
\end{proof}
Finally, we would like to add yet another ``strange'' multiplication $ \oplus $ corresponding to the case of
$ t = 4 $ of the Ramanujan-Nagell equation. In this case $ u = 2 $, $ v = 4 $, and the group law is
\[
x \oplus y = 6xy - 8x - 8y + 12.
\]
Here we have $ 1 \oplus 1 = 2  $ and $ 2 \oplus 2 = 4  $, which looks like the ordinary addition, but it is a different group operation.
The element $ \frac{4}{3} $ is an ``annihilator''  of the group because:
\[
x \oplus \frac{4}{3} = 8x - 8x - 8 \times \left(\frac{4}{3}\right) + 12 =  \frac{4}{3}.
\]

\bibliographystyle{plain}

%
%

\begin{thebibliography}{1}
	
	\bibitem{Danny}
	Alternative math: Short film, {https://youtu.be/Zh3Yz3PiXZw}, uploaded on:
	Sept.19, 2017.
	
	\bibitem{alaca2004introductory}
	Saban Alaca and Kenneth~S Williams.
	\newblock {\em Introductory algebraic number theory.}
	\newblock Cambridge University Press, 2004.
	
	\bibitem{brawley2001associative}
	Joel~V Brawley, Shuhong Gao, and Donald Mills.
	\newblock Associative rational functions in two variables.
	\newblock In {\em Finite Fields and Applications}, pages 43--56. Springer,
	2001.
	
	\bibitem{mordell1969diophantine}
	Louis~Joel Mordell.
	\newblock {\em Diophantine equations}, volume~30.
	\newblock Academic Press, 1969.
	
	\bibitem{nagell1961diophantine}
	T~Nagell.
	\newblock The {D}iophantine equation $ x^2+ 7= 2^n $.
	\newblock {\em Arkiv f{\"o}r Matematik}, 4(2):185--187, 1961.
	
	\bibitem{weil2006number}
	Andr{\'e} Weil.
	\newblock {\em Number Theory: An approach through history from Hammurapi to
		Legendre}.
	\newblock Springer Science \& Business Media, 2006.
	
\end{thebibliography}

\vfill\eject

\end{document}